\def\thickness{0.7pt}
\tikzstyle{dot}=[circle, draw=black, fill=black!25, inner sep=.4ex, line width=\thickness, node on layer=foreground]
\tikzstyle{blackdot}=[dot, fill=black!50]
\tikzstyle{blackdot}=[dot, fill=gray!40!white]
\tikzstyle{whitedot}=[dot, fill=white]
\tikzstyle{reddot}=[dot, fill=red]
\tikzstyle{greendot}=[dot, fill=green]
    \gdef\node@@on@layer{%
  \setbox\tikz@tempbox=\hbox\bgroup\pgfonlayer{#1}\unhbox\tikz@tempbox\endpgfonlayer\egroup}
\def\node@on@layer{\aftergroup\node@@on@layer}
\newlength\morphismheight
\newlength\minimummorphismwidth
\newlength\stateheight
\newlength\minimumstatewidth
\newlength\connectheight
\tikzset{width/.initial=\minimummorphismwidth}
\newif\ifwedge\pgfkeys{/tikz/wedge/.is if=wedge}
\tikzset{wedge}
\newif\ifvflip\pgfkeys{/tikz/vflip/.is if=vflip}
\newif\ifhflip\pgfkeys{/tikz/hflip/.is if=hflip}
\newif\ifhvflip\pgfkeys{/tikz/hvflip/.is if=hvflip}
\newif\ifconnectnw\pgfkeys{/tikz/connect nw/.is if=connectnw}
\newif\ifconnectne\pgfkeys{/tikz/connect ne/.is if=connectne}
\newif\ifconnectsw\pgfkeys{/tikz/connect sw/.is if=connectsw}
\newif\ifconnectse\pgfkeys{/tikz/connect se/.is if=connectse}
\newif\ifconnectn\pgfkeys{/tikz/connect n/.is if=connectn}
\newif\ifconnects\pgfkeys{/tikz/connect s/.is if=connects}
\newif\ifconnectnwf\pgfkeys{/tikz/connect nw >/.is if=connectnwf}
\newif\ifconnectnef\pgfkeys{/tikz/connect ne >/.is if=connectnef}
\newif\ifconnectswf\pgfkeys{/tikz/connect sw >/.is if=connectswf}
\newif\ifconnectsef\pgfkeys{/tikz/connect se >/.is if=connectsef}
\newif\ifconnectnf\pgfkeys{/tikz/connect n >/.is if=connectnf}
\newif\ifconnectsf\pgfkeys{/tikz/connect s >/.is if=connectsf}
\newif\ifconnectnwr\pgfkeys{/tikz/connect nw </.is if=connectnwr}
\newif\ifconnectner\pgfkeys{/tikz/connect ne </.is if=connectner}
\newif\ifconnectswr\pgfkeys{/tikz/connect sw </.is if=connectswr}
\newif\ifconnectser\pgfkeys{/tikz/connect se </.is if=connectser}
\newif\ifconnectnr\pgfkeys{/tikz/connect n </.is if=connectnr}
\newif\ifconnectsr\pgfkeys{/tikz/connect s </.is if=connectsr}
\tikzset{keylengthnw/.initial=\connectheight}
\tikzset{keylengthn/.initial =\connectheight}
\tikzset{keylengthne/.initial=\connectheight}
\tikzset{keylengthsw/.initial=\connectheight}
\tikzset{keylengths/.initial =\connectheight}
\tikzset{keylengthse/.initial=\connectheight}
\tikzset{connect nw length/.style={connect nw=true, keylengthnw={#1}}}
\tikzset{connect n length/.style ={connect n =true, keylengthn ={#1}}}
\tikzset{connect ne length/.style={connect ne=true, keylengthne={#1}}}
\tikzset{connect sw length/.style={connect sw=true, keylengthsw={#1}}}
\tikzset{connect s length/.style ={connect s =true, keylengths ={#1}}}
\tikzset{connect se length/.style={connect se=true, keylengthse={#1}}}
\tikzset{connect nw < length/.style={connect nw <=true, keylengthnw={#1}}}
\tikzset{connect n < length/.style ={connect n <=true,  keylengthn ={#1}}}
\tikzset{connect ne < length/.style={connect ne <=true, keylengthne={#1}}}
\tikzset{connect sw < length/.style={connect sw <=true, keylengthnw={#1}}}
\tikzset{connect s < length/.style ={connect s <=true,  keylengths ={#1}}}
\tikzset{connect se < length/.style={connect se <=true, keylengthse={#1}}}
\tikzset{connect nw > length/.style={connect nw >=true, keylengthnw={#1}}}
\tikzset{connect n > length/.style ={connect n >=true,  keylengthn ={#1}}}
\tikzset{connect ne > length/.style={connect ne >=true, keylengthne={#1}}}
\tikzset{connect sw > length/.style={connect sw >=true, keylengthsw={#1}}}
\tikzset{connect s > length/.style ={connect s >=true,  keylengths ={#1}}}
\tikzset{connect se > length/.style={connect se >=true, keylengthse={#1}}}
\theoremstyle{plain}
\newtheorem{thm}{Theorem}[section]
\newtheorem{lem}[thm]{Lemma}
\newcommand\varitem[1]{\bfem[\textbf{A\arabic{enumi}\rlap{$#1$}.}]%
  \edef\@currentlabel{A\arabic{enumi}{$#1$}}}
\crefname{lemma}{Lemma}{Lemmas}
\newcommand{\icirc}{\mathbin{\mathpalette\make@small\oplus}}
\newcommand{\smallotimes}{\mathbin{\mathpalette\make@small\otimes}}
\newcommand{\make@small}[2]{%
  \vcenter{\hbox{%
    \scalebox{0.6}{$\m@th#1#2$}%
  }}%
}
\theoremstyle{definition}
\newtheorem{defn}{Definition}[section]
\theoremstyle{remark}
\DeclareFontFamily{U}{skulls}{}
\DeclareFontShape{U}{skulls}{m}{n}{ <-> skull }{}
\DeclareFontFamily{U}{min}{}
\DeclareFontShape{U}{min}{m}{n}{<-> udmj30}{}
\begin{document}

{\centering\scshape\Large\textsc{An Axiomatic Approach to Higher Order Set Theory} \par}
{\centering\scshape\textsc{Alec Rhea} \par}

\begin{abstract}
Higher order set theory has been a topic of interest for some time, with recent efforts focused on the strength of second order set theories [Kam19]. In this paper we strive to present one 'theory of collections' that allows for a formal consideration of 'countable higher order set theory'. We will see that this theory is equiconsistent with $ZFC$ plus the existence of a countable collection of inaccessible cardinals. We will also see that this theory serves as a canonical foundation for some parts of mathematics not covered by standard set/class theories (e.g. $ZFC$ or $MK$), such as category theory.  \\
\end{abstract}

\tableofcontents

\chapter{Higher Order Set Theory}

Here we lay out the primitive notions under consideration, the language we will express them in, and the axioms these primitive notions obey. \\

\section{Primitive Notions}

The language is the first order language of set theory. The primitives are {\bf collections}, denoted with capital letters from the end of the alphabet $X,Y,Z,\dots$, together with {\bf collection membership}, denoted by $\in$. We may have that a collection $X$ {\bf is a member} of a collection $Y$, denoted $X\in Y$, or not, denoted $X\notin Y$. We have countable list of specified collections denoted with indexed individual constants $\{\mathcal{C}_n\}_{n<\omega}$, each called the {\bf $n+1$ collection of $n$-collections}, and members $X\in\mathcal{C}_n$ are called {\bf $n$-collections}. For $n>0$ we call an $n$-collection $X\in\mathcal{C}_n$ {\bf proper} iff it isn't an $m$-collection for any $m<n$. We will call $0$-collections {\bf sets}. \\

\section{Axioms}

The axioms are as follows. We have all the axioms of Zermelo set theory $Z$ [Zer08] plus foundation (denoted $Z^+$) for collections, and each $\mathcal{C}_n$ models all of $Z^+$ as well. Explicitly, recall that $Z^+$ can be axiomatized as follows (where we obtain the original phrasing by replacing the word 'collection' with the word 'set' and replacing uppercase letters $X,Y,Z,\dots$ denoting collections with lowercase letters $x,y,z,\dots$ denoting sets): \\
 
\subsubsection{Axioms of $Z,Z^+$}
\begin{itemize}
\item {\bf Z1 (Extensionality)}. Two collections are equal iff they have exactly the same members. $$\forall X\forall Y\big(X=Y\iff\forall Z(Z\in X\iff Z\in Y)\big).$$ For collections $X$ and $Y$ we define $$X\subseteq Y\iff\forall Z(Z\in X\implies Z\in Y)$$ and say that {\bf $Y$ contains $X$} in this situation, or that {\bf $X$ is a subcollection of $Y$}, or that {\bf $Y$ is a supercollection of $X$}.
\item {\bf Z2 (Separation)}. For any predicate $\phi$ and collection $X$ there exists a collection $Y$ whose members are precisely the members of $X$ satisfying $\phi$. $$\forall\phi\forall X\exists Y\forall Z\big(Z\in Y\iff Z\in X\wedge\phi(Z)\big).$$ We denote the collection $Y$ guaranteed by the instance of this axiom schemata at a predicate $\phi$ and collection $X$ by $$\{Z\in X|\phi(Z)\},$$ read as {\it the collection of all $Z$ in $X$ such that $\phi(Z)$}. We define the {\bf empty collection} $$\emptyset=\{X\in\mathcal{C}_0|X\neq X\},$$ which is unique by extensionality.
\item {\bf Z3 (Pairing)}. For any collections $X$ and $Y$, there exists a collection $Z$ whose members are precisely $X$ and $Y$. $$\forall X\forall Y\exists Z\forall A(A\in Z\iff A=X\vee A=Y).$$ We denote the collection $Z$ guaranteed by the instance of this axiom at collections $X$ and $Y$ by $$\{X,Y\}.$$ For any collection $X$ we define the {\bf singleton} $$\{X\}=\{X,X\},$$ and for any collections $X,Y$ we define the {\bf ordered pair} $$(X,Y)=\{\{X,\},\{X,Y\}\},$$ referring to $X$ and $Y$ as the {\bf first} and {\bf second coordinates} of $(X,Y)$, respectively.
\item {\bf Z4 (Union)}. For any collection $X$, there exists a collection $Y$ whose members are precisely the members of members of $X$. $$\forall X\exists Y\forall Z\big(Z\in Y\iff \exists A\in X(Z\in A)\big).$$ We denote the collection $Y$ guaranteed by this axiom at a collection $X$ by $$\bigcup X.$$ For collections $X$ and $Y$ we define the {\bf binary union} $$X\cup Y=\bigcup\{X,Y\},$$ and the {\bf successor} $$\mathcal{S}(X)=X\cup\{X\}.$$ We denote by $\mathcal{S}^n(X)$ the collection obtained by taking a collection $X$ and applying the successor operation $n$ times iteratively, so $$\mathcal{S}^n(X)=X\cup\{X\}\cup\{X\cup\{X\}\}\cup\cdots$$
\item {\bf Z5 (Infinity)}. There exists a collection containing exactly the empty collection and all of its successors. $$\exists X\Big(\emptyset\in X\wedge\forall Y(Y\in X\iff\exists Z\in X(Y=\mathcal{S}Z)\big)\Big).$$ We denote the collection $X$ whose existence is guaranteed by this axiom by $$\omega$$ and its uniqueness satisfying this axiom is a consequence of extensionality.
\item {\bf Z6 (Powercollection)}. For any collection $X$ there exists a collection $Y$ whose members are precisely the subcollections of $X$. $$\forall X\exists Y\forall Z(Z\subseteq X\iff Z\in Y).$$ We denote the collection $Y$ guaranteed by this axiom together with a collection $X$ by $$\mathcal{P}(X).$$ For collections $X$ and $Y$, we define the {\bf Cartesian product of $X$ and $Y$} to be the collection $$X\times Y=\{(X',Y')\in\mathcal{P}(\mathcal{P}(X\cup Y))|X'\in X\wedge Y'\in Y\}.$$ A {\bf relation from $X$ to $Y$} is a subcollection $R\subseteq X\times Y$. We say that a relation $R$ from $X$ to $Y$ is {\bf entire} iff every member of $X$ appears at least once as a first coordinate in $R$, and {\bf functional} iff each member of $X$ appears at most once as a first coordinate in $R$. A {\bf function from $X$ to $Y$} is a relation from $X$ to $Y$ which is entire and functional; we call $X$ the {\bf domain} and $Y$ the {\bf codomain} of the function. We denote a function $F$ from $X$ to $Y$ by $$F:X\to Y.$$ A {\bf relation} is a collection that is a relation from $X$ to $Y$ for some collections $X,Y$, and a function is a relation that is entire and functional. For a function $F$, we denote by $dmnF$ the collection of first coordinates in $F$ and $rngF$ the collection of second coordinates in $F$. We denote a function $F$ with domain $X$ in {\bf function notation} by $$F=\langle F(X'):X'\in X\rangle.$$ 
\item {\bf Z7 (Choice)}. For any collection $X$ not containing the empty collection, there exists a choice function on $X$. $$\forall X\Big(\emptyset\notin X\implies\exists F\big(F\ \text{is a function}\wedge dmnF=X\wedge rngF\subseteq\bigcup X\wedge \forall Y\in X(F(Y)\in Y)\big)\Big).$$
\end{itemize}
We define $Z$ to be the concatenation of these axioms. That is, $$Z=\bigwedge_{0<i<8}{\bf Z}i.$$ We obtain $Z^+$ from $Z$ by adding
\begin{itemize}
\item {\bf F1 (Foundation)}. All collections contain a member they are disjoint from. $$\forall X\exists Y(Y\in X\wedge Y\cap X=\emptyset).$$ 
\end{itemize}
So $$Z^+=Z\wedge{\bf F1}.$$ \\

\begin{itemize}
\item {\bf A1 - $Z^+$ for Collections}. All of the above are axioms in this theory. \\
\end{itemize}

\begin{defn}[{\it Models, Elementary Submodel}]
Let $\phi$ be a predicate in the language of collection theory, with $X$ a collection. We define the {\bf relativization of $\phi$ to $X$} denoted $\phi^X$, to be the predicate obtained from $\phi$ by binding all existential/universal quantifiers and free variables in $\phi$ to $X$. We say that $X$ {\bf models $\phi$}, written $$X\models\phi,$$ iff relativizing $\phi$ to $X$ produces a true statement. $$X\models\phi\iff\phi^X=\top.$$ We will say that a collection $X$ is an {\bf elementary submodel} of a collection $Y$, denoted $$X\preceq Y,$$ iff the following two conditions hold:
\begin{enumerate}
\item $X\subseteq Y$.
\item $Y$ doesn't model anything about members of $X$ that $X$ doesn't model. That is, for any collection $\{X_\alpha\}_{i<n}\subseteq X$ and any $n$-ary predicate $\phi$, we have that $$Y\models\phi(\{X_i\}_{i<n})\iff X\models\phi(\{X_i\}_{i<n}).$$ \\
\end{enumerate}
\end{defn}

So, for example, the relativization of pairing ({\bf Z3}) to a collection $X$ is the predicate $${\bf Z3}^X=\forall x\in X\forall y\in X\exists z\in X\forall a\in X(a\in z\iff a=x\vee a=y),$$ and $X\models{\bf Z3}$ iff this statement is true, so arbitrary doubletons of members in $X$ are also members of $X$. \\

\begin{defn}
We will say that a collection $X$ is {\bf transitive} iff members of members of $X$ are also members of $X$. $$X\ \text{is transitive}\iff\forall Y\forall Z(Y\in Z\in X\implies Y\in X).$$ We say that $X$ is {\bf complete} iff $X$ is transitive and subcollections of members of $X$ are also members of $X$. $$X\ \text{is complete}\iff X\ \text{is transitive}\wedge\forall Y\forall Z(Y\subseteq Z\in X\implies Y\in X).$$ \\
\end{defn}

\begin{itemize}
\item {\bf A2 - Complete Structured Collections of Collections}. Each collection of collections is complete and models Zermelo set theory plus foundation. $$\forall n<\omega(\mathcal{C}_n\ \text{is complete}\wedge\mathcal{C}_n\models Z^+).$$
\item {\bf A3 - Collection Hierarchy}. For all $n<\omega$, $\mathcal{C}_n$ is an elementary submodel of $\mathcal{C}_{n+1}$. $$\forall n<\omega(\mathcal{C}_n\preceq\mathcal{C}_{n+1}).$$ \\
\end{itemize}

This allows us to do 'all the basics' we expect to be able to do with each collection of collections, and ensures that each stage behaves well with previous stages. So far, this theory is (signifigantly) below $ZFC$ in consistency strength since $V_{\omega^2}$ is a model of this theory with $\mathcal{C}_n=V_{\omega\cdot(n+2)}$ for all $n<\omega$, where $V_\alpha$ denotes the $\alpha^{th}$ stage of the cumulative hierarchy in $ZFC$. Now for the big guns -- up first, we add the ability to freely define $n+1$-collections of $n$-collections. For all $n<\omega$, say that a predicate $\phi$ in the language of collection theory is {\bf safe above $n$} iff no $m$-collections for $m>n$ occur in $\phi$, and denote by $\Phi_n$ the collection of all predicates safe above $n$. \\

\begin{itemize}
\item {\bf A4 - Collection Building Axioms}. For any predicate $\phi$ safe above $n$, there exists an $n+1$ collection whose members are precisely the $n$-collections satisfying $\phi$. $$\forall\phi\in\Phi_n\exists X\in\mathcal{C}_{n+1}\forall Y\big(Y\in X\iff Y\in\mathcal{C}_n\wedge\phi(Y)\big).$$ We denote the collection $X$ guaranteed by the instance of this axiom at a predicate $\phi$ safe above $n$ by $${^{n+1}}\{X|\phi(X)\},$$ read as {\it the $n+1$-collection of $n$-collections $X$ such that $\phi(X)$}. We omit the superscript when it is obvious from context. \\
\end{itemize}

\begin{defn}[{\it Universe of $n$-Collections}]
For each $n<\omega$, we define $$\widehat{V_n}={^{n+1}}\{X|X=X\}\in\mathcal{C}_{n+1}.$$ We will refer to $\widehat{V_n}$ as the {\bf universe of $n$-collections}.
\end{defn}
\begin{lem}
$\forall n<\omega(\widehat{V_n}=\mathcal{C}_n)$. \\
\end{lem}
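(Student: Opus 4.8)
The plan is to reduce the claim to a single application of extensionality (\textbf{Z1}). Fix an arbitrary $n<\omega$; since the argument is uniform in $n$, this suffices to establish the universally quantified statement without any appeal to induction. Unwinding the definition, $\widehat{V_n}={^{n+1}}\{X|X=X\}$, so the content of the lemma is that the $n+1$-collection carved out by \textbf{A4} from the tautological predicate $\phi(X):=(X=X)$ is exactly $\mathcal{C}_n$.

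First I would verify that $\phi(X):=(X=X)$ lies in $\Phi_n$, i.e.\ that it is safe above $n$. This is immediate, since $\phi$ mentions no collection constants $\mathcal{C}_m$ whatsoever, so a fortiori none with $m>n$ occur in it, and \textbf{A4} is licensed. Instantiating \textbf{A4} at $\phi$ then yields, for every collection $Y$, the equivalence $Y\in\widehat{V_n}\iff Y\in\mathcal{C}_n\wedge Y=Y$.

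The key observation is that $Y=Y$ holds for every collection $Y$ by reflexivity of equality, so the right-hand conjunction collapses to $Y\in\mathcal{C}_n$. Hence $\widehat{V_n}$ and $\mathcal{C}_n$ have precisely the same members, and extensionality (\textbf{Z1}) forces $\widehat{V_n}=\mathcal{C}_n$. There is essentially no obstacle to overcome here: the only point demanding attention is confirming membership in $\Phi_n$ so that \textbf{A4} applies, after which the conclusion is immediate. I would note in passing that, combined with the clause $\widehat{V_n}\in\mathcal{C}_{n+1}$ from the definition, this also re-exhibits $\mathcal{C}_n$ as an element of $\mathcal{C}_{n+1}$, consistent with calling $\mathcal{C}_n$ the \emph{$n+1$ collection of $n$-collections}.
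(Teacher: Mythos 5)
Your argument is correct and is precisely the immediate justification the paper has in mind: the paper states this lemma without proof, treating it as a direct consequence of instantiating \textbf{A4} at the tautological predicate $X=X$ (which you rightly check is safe above $n$) and then applying extensionality. Nothing is missing.
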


The final axiom below gives us replacement in each collection of collections, greatly strengthening the consistency strength of the theory. First, note that we can pair/union/etc. $n$-collections and $m$-collections for $m<n$ since $m$-collections are $n$-collections in this situation by {\bf A3}, and all collection stages support the standard operations by {\bf A2}.  \\

\begin{itemize}
\item {\bf A5 - Replacement}. For all $n<\omega$, if $F$ is a function and $dmnF$ is an $n$-collection and $F(X)$ is an $n$-collection for all $X\in dmnF$, then $rngF$ is an $n$-collection. $$\forall F\Big(F\ \text{is a function}\wedge dmnF\in\widehat{V_n}\wedge\forall X\in dmnF(F(X)\in\widehat{V_n})\implies rngF\in\widehat{V_n}\Big).$$ \\
\end{itemize}

These axioms are inspired by the axioms of class building and replacement in Donald Monks concise treatment of MK class theory [Mon69]. Note that collection building and replacement together trivially imply replacement (in the $ZFC$ sense) for sets (and $n$-collections for all $n<\omega$); for any binary predicate $\phi(A,B)$ and set $x$ such that for each $x'\in x$ there exists some unique set $y$ with $\phi(x',y)=\top$, define $$Y={^1}\big\{y:\exists x'\in x\big(\phi(x',y)\big)\big\}$$ and for all $x'\in x$ denote by $\overline\phi(x')$ the unique set $y$ such that $\phi(x,y)$.  Then $$F=\langle\overline\phi(x'):x'\in x\rangle\subseteq x\times Y$$ is a function by uniqueness of $\overline\phi(x)$ which is further surjective by the definition of $Y$, $x$ is a set by assumption, and $F(x')=\overline\phi(x)$ is also a set for all $x'\in x$ by assumption, so $rngF=Y$ is also a set by replacement; we thusly have all of standard $ZFC$ for sets in this theory. This will be a somewhat standard feature of higher order set theory -- we can define any desired $n+1$-collection of $n$-collections with reckless abandon, but have to more judiciously apply the axioms of $Z^+$ together with replacement at each stage to show that the $n+1$ collections we define via collection building are sometimes also $n$-collections.  \\

\section{Consistency Strength}

This theory proves that $\mathcal{C}_0=\widehat{V_0}$ is a model of $ZFC$ and so exceeds $ZFC$ in consistency strength. On the other hand, the theory $ZFC$ plus the existence of a countable collection $\{\kappa_n\}_{n<\omega}$ of strictly inaccessible cardinals with $\kappa_n<\kappa_{n+1}$ for all $n<\omega$ can model this theory by setting $\mathcal{C}_n=V_{\kappa_n}$ for all $n<\omega$, and so serves as an upper bound on its consistency strength. The author conjectures that the latter is also a lower bound on the consistency strength of this theory. For a proof sketch, define ordinals in this theory to be hereditarily membership transitive collections, define rank $\rho$ as usual for all collections, and for each $n<\omega$ define $$Ord_n={^n}\{X:X\ \text{is an ordinal}\}.$$ In particular, each $Ord_n$ is a proper $n+1$ collection since it is also an ordinal and would thusly be a member of itself by definition if it were an $n$-collection, contradicting foundation. Further, defining stages of the cumulative hierarchy $V_\alpha$ for ordinals $\alpha$ using rank in the usual way we have that $$V_{Ord_n+1}\models MK$$ for all $n<\omega$. To see this, observe that for all $n<\omega$ we have that  $$V_{Ord_n}=\widehat{V_n}\models ZFC$$ and if we have two $n+1$-collections $X,Y\in V_{Ord_n+1}$ (viewed as 'classes over $V_{Ord_n}$') with $X\in Y$ we have $$\rho(X)<\rho(Y)\leq Ord_n\implies X\in V_{Ord_n}=\widehat{V_n},$$ so a 'class $X$ over $V_{Ord_n}$' becomes a 'set in $V_{Ord_n}$' as soon as it is a member of another 'class over $V_{Ord_n}$'. Using the characterization of strictly inaccessible cardinals as those ordinals $\alpha$ such that $V_{\alpha+1}\models MK$, we see that $Ord_n$ is strictly inaccessible for all $n<\omega$. \\

\section{Potentialist vs. Completionist Set Theory}

There is a philosophical debate raging vaguely along the lines of wether we should view things as 'completed' or 'unending', from infinity to sets themselves, where 'potentialists' want things to be forever iterable and view references to 'all the collections' as a 'completed object' to be bad pool. Completionists feel the opposite, that it is coherent to refer to 'all the collections' as an object of discourse. The theory as presented above is potentialist in nature -- we can collect things up, and collect up those collections, so on and so forth, but have no ability to refer to 'all collections at all stages'. We could turn this into a theory satisfying completionists by adding one final putative axiom. \\

\begin{itemize}
\item {\bf A6 - Collection Building Completion}. For any predicate $\phi$ there exists a collection $X$ whose members are precisely the $n$-collections for all $n<\omega$ satisfying $\phi$. $$\forall\phi\exists X\forall Y\Big(Y\in X\iff\exists n\big(Y\in\mathcal{C}_n\wedge\phi(Y)\big)\Big).$$ We denote the collection $X$ guaranteed by this axiom together with a predicate $\phi$ by $$^{\omega}\{Y|\phi(Y)\},$$ referring to $X$ as an {\bf $\omega$-collection} and omitting the superscript when it is obvious from context. If $X$ is an $\omega$-collection that is not an $n$-collection for any $n$, we call $X$ a {\bf proper $\omega$-collection}. \\
\end{itemize}

This axiom allows us to 'close' the iterative collection forming process and make reference to a collection of 'all collections (that we care about)' $$\widehat{V_\omega}={^\omega}\{X:X=X\},$$ which is also provably not an $n$-collection for any $n<\omega$, so proper $\omega$-collections exist as soon as this axiom is added. Note that $\widehat{V_\omega}$ is also a model of this theory without {\bf A6} and thusly proves the consistency of the theory laid out in the preceding section, but this is the only increase in consistency we get from this additional axiom. We will refer to the theory given by the primitives and language above together with {\bf A1}$-${\bf A5} as {\it higher order set theory} ($HOST$), and the theory obtained by adding {\bf A6} as {\it generalized higher order set theory} ($GHOST$). As a final note, Cantor's theorem in $ZFC$ on the lack of a surjection from a set to its powerset generalizes trivially to the following in $HOST$. \\

\begin{lem}[{\it Cantor}]
Let $X$ be a collection. Then $|X|<|\mathcal{P}(X)|$.
\end{lem}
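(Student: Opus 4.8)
The plan is to transport the classical diagonal argument wholesale, since every ingredient it needs --- powercollections, separation, and the function machinery --- is available verbatim from the axioms of $Z^+$ for collections (\textbf{A1}). I read the strict inequality $|X|<|\mathcal{P}(X)|$ in the usual way: there is an injection $X\hookrightarrow\mathcal{P}(X)$, and there is no bijection between the two. For the first half I would exhibit the singleton map $x\mapsto\{x\}$, whose graph $\langle\{x\}:x\in X\rangle$ is a collection by pairing (\textbf{Z3}) and separation (\textbf{Z2}), lands in $\mathcal{P}(X)$ since each $\{x\}\subseteq X$, and is injective by extensionality (\textbf{Z1}); this witnesses $|X|\le|\mathcal{P}(X)|$.

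For the second half it suffices to show that no function $F:X\to\mathcal{P}(X)$ is surjective, since a bijection would in particular be a surjection. Given such an $F$, I would form the diagonal subcollection
$$D=\{x\in X\mid x\notin F(x)\},$$
which exists by separation: the defining predicate $\phi(x)\equiv\exists y\big((x,y)\in F\wedge x\notin y\big)$, with $F$ admitted as a parameter, is a legitimate formula in the language of collection theory, and \textbf{Z2} imposes no safety restriction (unlike the collection-building axiom \textbf{A4}). Since $D\subseteq X$ we have $D\in\mathcal{P}(X)$ by powercollection (\textbf{Z6}). If $F$ were surjective there would be some $d\in X$ with $F(d)=D$, and then unwinding the membership equivalence $d\in D\iff d\notin F(d)=D$ yields the usual contradiction. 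Hence no surjection exists, so no bijection exists, and combined with the injection above this gives $|X|<|\mathcal{P}(X)|$.

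There is essentially no analytic obstacle here --- the diagonal argument is indifferent to the ambient level structure --- so the only point demanding care is bookkeeping about collection levels, which I would dispatch by noting that it never bites. If $X$ is an $n$-collection then $\mathcal{P}(X)$ is again an $n$-collection, because $\mathcal{C}_n$ is complete and models \textbf{Z6} by \textbf{A2}, so subcollections of $X$ (in particular $D$) already live at the same stage as $X$; the argument therefore takes place entirely inside a single $\mathcal{C}_n$ and no appeal to the higher-order apparatus (\textbf{A4}, \textbf{A5}) is needed. The one genuinely framework-specific check is that the range of $F$ and the diagonal $D$ inhabit the same powercollection, so that ``$F(d)=D$'' is a meaningful equation from which to derive a contradiction; this is immediate once $D\in\mathcal{P}(X)$ is established. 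I would close by remarking that the proof is uniform in $X$ --- whether $X$ is a set or a proper $n$-collection --- exactly as the lemma's phrasing ``let $X$ be a collection'' suggests.
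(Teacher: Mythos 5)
Your proof is correct and is essentially the paper's own argument: the paper simply says ``diagonalization as usual,'' reading $|X|<|\mathcal{P}(X)|$ as ``there is no surjection $X\to\mathcal{P}(X)$,'' which is exactly the second half of what you do. The injection via $x\mapsto\{x\}$ is harmless extra content under your stronger reading of the inequality, and your level-bookkeeping remarks confirm what the paper leaves implicit, namely that \textbf{Z2} and \textbf{Z6} for collections suffice without invoking \textbf{A4} or \textbf{A5}.
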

\begin{proof}
Diagonalization as usual, interpreting $|X|<|\mathcal{P}(X)|$ to mean 'there is no surjection $s:X\to\mathcal{P}(X)$'. \\
\end{proof}

\chapter{Mathematics in Higher Order Set Theory}

In this section, we will see that $HOST$ frees us from several standard 'constraints' in set and class theories like $ZFC/GBC/MK/etc.$  We do not assume {\bf A6} in what follows unless otherwise specified.\\

\section{Categories Large and Small}

In almost any treatment of category theory, there is some discussion of which categories are 'large' and which categories are 'small' in a given foundation. In $HOST$, we have a hierarchy of notions of small/large-ness. \\

\begin{defn}[{\it $n$-Large Categories}]
A {\bf category} $\mathcal{C}$ consists of the following data:
\begin{enumerate}
\item A collection of {\bf objects}, denoted $${\bf Ob}_\mathcal{C}.$$
\item A collection of {\bf arrows}, denoted $${\bf Hom}_\mathcal{C}.$$
\item {\bf Domain} and {\bf codomain} selecting functions $${\sf dom,cod}:{\bf Hom}_\mathcal{C}\rightrightarrows{\bf Ob}_\mathcal{C}.$$ For each pair of objects $X,Y\in{\bf Ob}_\mathcal{C}$ we define a collection of {\bf arrows from $X$ to $Y$}, denoted ${\bf Hom}_\mathcal{C}(X,Y)$, by $${\bf Hom}_\mathcal{C}(X,Y)=\{f\in{\bf Hom}_\mathcal{C}|{\sf dom}(f)=X\wedge{\sf cod}(f)=Y\}.$$ We denote an arrow from $X$ to $Y$ by $$f:X\to Y.$$
\item For each object $X\in{\bf Ob}_\mathcal{C}$, an {\bf identity selecting function} $$1_X:\{X\}\to {\bf Hom}_\mathcal{C}(X,X).$$ We will abuse  notation and denote the image of $X$ under $1_X$ by $1_X$.
\item For each triplet of objects $X,Y,Z\in{\bf Ob}_\mathcal{C}$, a {\bf composition function} $$\circ_{XYZ}:{\bf Hom}_\mathcal{C}(Y,Z)\times{\bf Hom}_\mathcal{C}(X,Y)\to{\bf Hom}_\mathcal{C}(X,Z).$$
\end{enumerate}
These data are subject to the usual axioms for a category (with the variables/parameters interpreted as collections). For each $n<\omega$, we say that a category $\mathcal{C}$ is
\begin{itemize}
\item {\bf $n$-small} iff ${\bf Ob}_\mathcal{C}\in\widehat{V_n}$ and ${\bf Hom}_\mathcal{C}\in\widehat{V_n}$,
\item {\bf locally $n$-small} iff ${\bf Ob}_\mathcal{C}\in\widehat{V_{n+1}}$ and ${\bf Hom}_\mathcal{C}(X,Y)\in\widehat{V_n}$ for all $X,Y\in{\bf Ob}_\mathcal{C}$,
\item {\bf $n$-tiny} iff $\mathcal{C}$ is $m$-small for some $m<n$, {\bf $n$-large} iff $\mathcal{C}$ is $n+1$-small but not $n$-small, and {\bf $n$-very large} iff $\mathcal{C}$ is not $n+1$-small.
\end{itemize}
We define functors etc. in the obvious way relative to this definition for categories. We say that a category $\mathcal{C}$ is {\bf $n$-complete} iff it has limits for all diagrams of shape $\mathcal{S}$ where $\mathcal{S}$ is any $n$-small category. \\
\end{defn}

Freyd originally showed that any 'large' category $\mathcal{C}$ with all 'large' limits is necessarily thin, meaning $\mathcal{C}$ has at most one arrow between any two objects. The 'large' categories considered by Freyd are $1$-complete $1$-small ($0$-large) categories in our theory, and Freyd's observation trivially generalizes to the following. \\

\begin{lem}[{\it Freyd}]
Any $n$-complete $n$-small category is thin.
\end{lem}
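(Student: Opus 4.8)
The plan is to adapt Freyd's classical diagonalization to the stratified setting, where the single dichotomy ``small/large'' is replaced by the stage $\widehat{V_n} = \mathcal{C}_n$. Suppose toward a contradiction that $\mathcal{C}$ is an $n$-complete $n$-small category that is \emph{not} thin, so there exist objects $X, Y \in \mathbf{Ob}_\mathcal{C}$ together with two distinct parallel arrows $f \neq g$ in $\mathbf{Hom}_\mathcal{C}(X, Y)$. The goal is to manufacture from this single pair strictly more arrows of $\mathcal{C}$ than $\mathbf{Hom}_\mathcal{C}$ can accommodate, contradicting $n$-smallness.

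First I would observe that for any $n$-collection $S \in \widehat{V_n}$, the discrete category $\Delta_S$ with object collection $S$ and only identity arrows is itself $n$-small: its object collection is $S \in \widehat{V_n}$, and its arrow collection, being (in bijection with) the diagonal of $S \times S$, is likewise a member of $\widehat{V_n}$ since $\mathcal{C}_n$ models $Z^+$ and is complete. Since $\mathcal{C}$ is $n$-complete, the constant diagram at $Y$ of shape $\Delta_S$ has a limit, namely the product $Y^S = \prod_{s \in S} Y$, which is again an object of $\mathcal{C}$. By the universal property of the product, each family of arrows $(h_s : X \to Y)_{s \in S}$ induces a unique arrow $X \to Y^S$, and distinct families induce distinct arrows. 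Restricting to families valued in $\{f, g\}$, the assignment sending $\varphi \in \{f,g\}^S$ to the induced arrow $\langle \varphi(s) : s \in S \rangle$ is therefore an injection
\[
\{f,g\}^S \hookrightarrow \mathbf{Hom}_\mathcal{C}(X, Y^S) \subseteq \mathbf{Hom}_\mathcal{C},
\]
so that $2^{|S|} = |\{f,g\}^S| \leq |\mathbf{Hom}_\mathcal{C}|$.

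Finally I would specialize $S$ to be $\mathbf{Hom}_\mathcal{C}$ itself, a legitimate choice since $\mathbf{Hom}_\mathcal{C} \in \widehat{V_n}$ by $n$-smallness. This yields
\[
|\mathcal{P}(\mathbf{Hom}_\mathcal{C})| = 2^{|\mathbf{Hom}_\mathcal{C}|} \leq |\mathbf{Hom}_\mathcal{C}|,
\]
in direct contradiction with the Cantor lemma, which gives $|\mathbf{Hom}_\mathcal{C}| < |\mathcal{P}(\mathbf{Hom}_\mathcal{C})|$. Hence no such pair $f \neq g$ can exist, and $\mathcal{C}$ is thin.

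The main obstacle I anticipate is not the diagonalization itself -- which is essentially Freyd's -- but the bookkeeping needed to certify that every object produced stays inside the intended stage, so that the cardinal inequalities may all be read off inside $\mathcal{C}_n \models Z^+$. Concretely, I would need to confirm that the limit object $Y^S$ delivered by $n$-completeness has its hom-collection $\mathbf{Hom}_\mathcal{C}(X, Y^S)$ genuinely sitting as a subcollection of $\mathbf{Hom}_\mathcal{C}$ (immediate from the definition of $\mathbf{Hom}_\mathcal{C}(X, Y^S)$ by separation), and that forming $\{f,g\}^S$ and comparing its size with $|\mathbf{Hom}_\mathcal{C}|$ is carried out entirely among $n$-collections, so that completeness of $\mathcal{C}_n$ together with the $Z^+$-axioms it models license the cardinal arithmetic. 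Once these stage-tracking checks are in place, the Cantor lemma closes the argument exactly as in the classical case.
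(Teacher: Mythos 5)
Your proposal is correct and follows essentially the same route as the paper's proof: use $n$-completeness to form the product $\prod_{\mathbf{Hom}_\mathcal{C}} Y$, observe that $f \neq g$ yields $2^{|\mathbf{Hom}_\mathcal{C}|}$ distinct arrows $X \to \prod_{\mathbf{Hom}_\mathcal{C}} Y$ (distinguished by composing with the projections), and contradict the Cantor lemma. Your version merely spells out the stage-tracking and the injectivity of the tupling map, which the paper leaves implicit.
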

\begin{proof}
Let $\mathcal{C}$ be an $n$-small $n$-complete category, and suppose we have objects $X,Y\in{\bf Ob}_\mathcal{C}$ with arrows $f,g:X\rightrightarrows Y$. Consider the limit $$\Big(\prod_{{\bf Hom}_\mathcal{C}} Y,\{\pi_h:\prod_{{\bf Hom}_\mathcal{C}}Y\to Y\}_{h\in{\bf Hom}_\mathcal{C}}\Big).$$ $f\neq g$ implies the existence of $2^{{\bf Hom}_\mathcal{C}}$ arrows $X\to\prod_{{\bf Hom}_\mathcal{C}}Y$ in ${\bf Hom}_\mathcal{C}$, so $$|\mathcal{P}({\bf Hom}_\mathcal{C})|=|2^{{\bf Hom}_\mathcal{C}}|\leq|{\bf Hom}_\mathcal{C}|$$ contradicting Cantors lemma, thus $f=g$.  \\
\end{proof}

The proof in this setting lays bare the real issue at hand, our ability to index a limit in a category over the collection of all arrows in that category. We can allow all limits for collections of sizes 'smaller' than the collection of all arrows in a category without issue, so $n$-large locally $n$-small categories can be $n$-complete without falling prey to the above lemma. \\

\section{Categories of Collections}

In this theory we have a hierarchy of 'categories of collections', the bottom of which is the standard category of sets.  \\

\begin{defn}[{\it Category of Collections}]
For each natural number $n$, we define a category $${\bf Coll}_n$$ called the {\bf category of $n$-collections} as follows. 
\begin{enumerate}
\item The objects of ${\bf Coll}_n$ are $n$-collections. That is, $${\bf Ob}_{{\bf Coll}_n}=\widehat{V_n}.$$
\item The arrows of ${\bf Coll}_n$ are functions between $n$-collections. That is, for $n$-collections $X$ and $Y$ we have $${\bf Hom}_{{\bf Coll}_n}(X,Y)=\{f:X\to Y|f\ \text{is a function}\}.$$
\item Identities are given by identity functions.
\item Composition is given by composition of functions.
\end{enumerate} 
For each $n<\omega$, we define a functor $$\mathcal{H}_n:{\bf Coll}_n\to{\bf Coll}_{n+1}$$ given by sending collections and functions to themselves. That is, $$\mathcal{H}_n(X)=X,$$ $$\mathcal{H}_n(f:X\to Y)=f:X\to Y.$$ We will refer to $\mathcal{H}_n$ as the {\bf $n^{th}$ collection hierarchy embedding.} We define $${\bf Set}:={\bf Coll}_0,$$ and refer to ${\bf Set}$ as the {\bf category of sets}. Further, we define $${\bf Class}:={\bf Coll}_1$$ and refer to {\bf Class} as the {\bf category of classes}. \\
\end{defn}

In what follows, if we state a definition or theorem involving ${\bf Coll}_n$ without reference to any specific index we mean the definition/theorem to be universally quantified, i.e. applying for all $n<\omega$. The category {\bf Class} already doesn't exist in $ZFC/GBC/MK/$etc., since any class that is a member of another class in these theories is immediately a set. In $HOST$, we are allowed to put classes in other classes (and $n+1$-collections in other $n+1$-collections more generally) without immediately causing them to be sets ($n$-collections) -- the only $n+1$-collections that 'become $n$-collections' are those that fall under the scope of replacement. \\

\begin{lem}[{\it Hierarchy of Categories of Collections}]
${\bf Coll}_n$ is an $n$-large locally $n$-small $n$-complete topos. Further, for all $n<\omega$ the $n^{th}$ collection hierarchy embedding $$\mathcal{H}_{n}:{\bf Coll}_n\hookrightarrow{\bf Coll}_{n+1}$$ is full, faithful, and strictly creates and preserves $n$-(co)limits.
\end{lem}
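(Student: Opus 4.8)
The plan is to first dispatch the size and structural claims about a single $\mathbf{Coll}_n$, and then treat the embedding $\mathcal{H}_n$. Throughout I will use the identification $\widehat{V_n}=\mathcal{C}_n$ from the preceding lemma, together with the fact established in the consistency strength section that $\widehat{V_n}\models ZFC$; in particular each $\mathcal{C}_n$ is complete and models $Z^+$ by {\bf A2}, and replacement {\bf A5} holds at level $n$.

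First I would handle the sizes. Since $\mathcal{C}_n$ is an $n+1$-collection we have $\mathbf{Ob}_{\mathbf{Coll}_n}=\widehat{V_n}=\mathcal{C}_n\in\mathcal{C}_{n+1}=\widehat{V_{n+1}}$, and collection building {\bf A4} applied to the predicate ``is a function between $n$-collections'' (which is safe above $n$) yields $\mathbf{Hom}_{\mathbf{Coll}_n}\in\widehat{V_{n+1}}$; hence $\mathbf{Coll}_n$ is $n+1$-small. It fails to be $n$-small because $\mathbf{Ob}_{\mathbf{Coll}_n}=\mathcal{C}_n\in\mathcal{C}_n$ would contradict foundation {\bf F1}, so $\mathbf{Coll}_n$ is $n$-large. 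For local $n$-smallness, each hom-collection $\mathbf{Hom}_{\mathbf{Coll}_n}(X,Y)$ is a subcollection of $\mathcal{P}(X\times Y)$; since $X,Y\in\mathcal{C}_n$ and $\mathcal{C}_n$ is closed under products and powercollections and is complete, $\mathbf{Hom}_{\mathbf{Coll}_n}(X,Y)\in\widehat{V_n}$.

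Next comes $n$-completeness and the topos structure. Given an $n$-small diagram $D\colon\mathcal{S}\to\mathbf{Coll}_n$, I build its limit in the usual way as an equalizer of the two canonical maps between $\prod_{i\in\mathbf{Ob}_\mathcal{S}}D(i)$ and $\prod_{(f\colon i\to j)}D(j)$. The crucial point is that the index collections $\mathbf{Ob}_\mathcal{S}$ and $\mathbf{Hom}_\mathcal{S}$ are $n$-collections, as $\mathcal{S}$ is $n$-small, so by replacement {\bf A5} the image $\{D(i):i\in\mathbf{Ob}_\mathcal{S}\}$ is an $n$-collection, its product is an $n$-collection, and the equalizer, being a subcollection, is again an $n$-collection by completeness. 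Thus every $n$-small limit exists in $\mathbf{Coll}_n$. For the topos axioms: finite limits are a special case (finite shape categories are $0$-small, hence $n$-small by {\bf A3}); cartesian closure is witnessed by the exponentials $Y^X=\mathbf{Hom}_{\mathbf{Coll}_n}(X,Y)\in\widehat{V_n}$ from the size step; and the subobject classifier is $\Omega=\{\emptyset,\{\emptyset\}\}$ with $\mathrm{true}\colon 1\to\Omega$, classifying subcollections by characteristic functions exactly as in $\mathbf{Set}$. Alternatively, one may simply invoke that the category of sets internal to any model of $ZFC$ is a (well-pointed) topos and apply this to $\widehat{V_n}\models ZFC$.

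Finally I would treat the embedding. Faithfulness of $\mathcal{H}_n$ is immediate since it is the identity on arrows. For fullness, an arrow $X\to Y$ in $\mathbf{Coll}_{n+1}$ between $n$-collections $X,Y$ is a function $X\to Y$, i.e. a subcollection of $X\times Y\in\mathcal{C}_n$, hence an $n$-collection by completeness; therefore $\mathbf{Hom}_{\mathbf{Coll}_{n+1}}(X,Y)$ and $\mathbf{Hom}_{\mathbf{Coll}_n}(X,Y)$ are literally the same collection and $\mathcal{H}_n$ is a bijection on hom-collections. Preservation and strict creation of $n$-(co)limits then both reduce to two observations: the explicit product/equalizer construction of an $n$-small limit (resp. coproduct/coequalizer for a colimit) is \emph{absolute}, yielding the same collection whether computed in $\mathbf{Coll}_n$ or $\mathbf{Coll}_{n+1}$; and, by the replacement argument above, the $(n+1)$-level (co)limit of an $n$-small diagram of $n$-collections is in fact an $n$-collection. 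Hence the unique cone in $\mathbf{Coll}_n$ lying over the limit cone in $\mathbf{Coll}_{n+1}$ is itself a limit there, which, using fullness to transport the universal property, is exactly strict creation, and $\mathcal{H}_n$ preserves it on the nose. The main obstacle is precisely this interplay between replacement and absoluteness underlying ``strictly creates'': one must verify that the set-theoretic (co)limit used to compute an $n$-small diagram neither grows nor shrinks when reinterpreted one level up, while {\bf A5} simultaneously confines it to $\widehat{V_n}$. The topos verification itself is routine once $\widehat{V_n}\models ZFC$ is in hand; the size bookkeeping and the creation clause are where care is genuinely required.
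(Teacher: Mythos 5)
Your proposal is correct and takes essentially the same route as the paper's own (much terser) proof: the paper likewise defers completeness, the topos structure, and the size claims to the standard argument for $\mathbf{Set}$ reinterpreted over $\widehat{V_n}$, notes faithfulness is trivial because $\mathcal{H}_n$ is the identity, and proves fullness by exactly your observation that a function between $n$-collections lies in $\mathcal{P}(X\times Y)$ and is hence itself an $n$-collection. Your added detail on replacement, absoluteness, and strict creation simply fills in steps the paper dismisses as ``similarly trivial,'' so there is no substantive divergence.
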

\begin{proof}
These are all straightforward observations. For details on completeness, being a topos, and $n$-largeness/local $n$-smallness, see any proof that ${\bf Set}$ has these properties but interpret the symbols for sets as representing $n$-collections instead. Faithfulness for $\mathcal{H}_n$ is trivial since it is the identity on its domain, and fullness holds since if $X$ and $Y$ are $n$-collections then so is any function $f:X\to Y\in\mathcal{P}(X\times Y)$. All other claims are similarly trivial to see, noting that we may have $n+1$-(co)limits that are not created by $\mathcal{H}_n$. \\
\end{proof}

We could obviously assume {\bf A6}, define ${\bf Coll}_\omega$, and observe that the colimit of $$\xymatrix@C5mm{{\bf Coll}_0 \ar[r]^{\mathcal{H}_0} & {\bf Coll}_1 \ar[r]^{~\;\mathcal{H}_1} & \cdots}$$ is ${\bf Coll}_\omega$ together with the obvious subcategory inclusions $\mathcal{H}_{n,\omega}:{\bf Coll}_n\to{\bf Coll}_\omega$. \\

\section{Categories of Categories}

And we finally come to the big shlamozzle in any discussion of foundations for category theory -- what do we use for a 'category of categories'? \\

\begin{defn}[{\it $2$-Category of $n$-Small Categories}]
For all $n<\omega$, we define a $2$-category $$\mathfrak{cat}_n$$ called the {\bf $2$-category of $n$-small categories} as follows: 
\begin{enumerate}
\item The objects of $\mathfrak{cat}_n$ are $n$-small categories. That is, $${\bf Ob}_{\mathfrak{cat}_n}={^{n+1}}\{\mathcal{C}|\ \mathcal{C}\ \text{is a category}\}.$$
\item For each pair of $n$-small categories $\mathcal{C}$, $\mathcal{D}$, the component category $$\mathfrak{cat}_n(\mathcal{C},\mathcal{D})$$ is given by the category of functors and natural transformations from $\mathcal{C}$ to $\mathcal{D}$. That is, $$\mathfrak{cat}_n(\mathcal{C},\mathcal{D})=\mathcal{D}^\mathcal{C}.$$ Recall that $\mathcal{D}^\mathcal{C}$ is defined as follows:
\begin{itemize}
\item The objects of $\mathcal{D}^\mathcal{C}$ are functors $F:\mathcal{C}\to\mathcal{D}$. That is, $${\bf Ob}_{\mathcal{D}^\mathcal{C}}={^{n+1}}\{F:\mathcal{C}\to\mathcal{D}|F\ \text{is a functor}\}.$$ 
\item For each pair of functors $F,G:\mathcal{C}\rightrightarrows\mathcal{D}$, the hom-collection from $F$ to $G$ is given by the collection of natural transformations from $F$ to $G$. That is, $${\bf Hom}_{\mathcal{D}^\mathcal{C}}(F,G)={^{n+1}}\{\alpha:F\Rightarrow G|\alpha\ \text{is a natural transformation}\}.$$ 
\end{itemize}
\item Identities are given by identity functors and identity natural transformations.
\item Composition functors are given by composition of functors and Godement products of natural transformations.
\end{enumerate}
\end{defn}
\begin{lem}
$\mathfrak{cat}_n$ is an $n$-large locally $n$-small category.
\end{lem}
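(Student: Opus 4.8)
The plan is to check the size conditions of the definition directly against the data of $\mathfrak{cat}_n$, reading the relevant collections as follows: for local $n$-smallness the hom between two objects $\mathcal{C},\mathcal{D}$ is the component category $\mathcal{D}^\mathcal{C}$, while for the $(n+1)$-small versus $n$-small dichotomy I take the object collection ${\bf Ob}_{\mathfrak{cat}_n}$ together with the collection of all functors between $n$-small categories. First I would record that ${\bf Ob}_{\mathfrak{cat}_n}={^{n+1}}\{\mathcal{C}\mid\mathcal{C}\text{ is an }n\text{-small category}\}$ is an $(n+1)$-collection by {\bf A4}, since the defining predicate quantifies only over $n$-collections and is therefore safe above $n$; the collection of all functors between $n$-small categories is an $(n+1)$-collection for the same reason. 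This already shows $\mathfrak{cat}_n$ is $(n+1)$-small and that its object collection lies in $\widehat{V_{n+1}}$.

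The crux is local $n$-smallness: I must show that for fixed $n$-small $\mathcal{C},\mathcal{D}$ the hom $\mathcal{D}^\mathcal{C}$, which the definition produces via the ${^{n+1}}\{\cdots\}$ constructor and hence a priori only knows to be an $(n+1)$-collection, is in fact an $n$-collection. Here I would invoke that $\widehat{V_n}=\mathcal{C}_n$ models $Z^+$ and is \emph{complete} ({\bf A2}). Since $\mathcal{C},\mathcal{D}$ are $n$-small, ${\bf Ob}_\mathcal{C},{\bf Hom}_\mathcal{C},{\bf Ob}_\mathcal{D},{\bf Hom}_\mathcal{D}\in\widehat{V_n}$, so the bounding collection $\mathcal{P}({\bf Ob}_\mathcal{C}\times{\bf Ob}_\mathcal{D})\times\mathcal{P}({\bf Hom}_\mathcal{C}\times{\bf Hom}_\mathcal{D})$ lies in $\widehat{V_n}$. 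Each functor $F:\mathcal{C}\to\mathcal{D}$ is a member of this product, and the collection ${\bf Ob}_{\mathcal{D}^\mathcal{C}}$ of all of them is a subcollection of it; as $\widehat{V_n}$ is complete and the product is an $n$-collection, this subcollection is itself an $n$-collection. The same bounding argument shows each ${\bf Hom}_{\mathcal{D}^\mathcal{C}}(F,G)$ of natural transformations is an $n$-collection, and bundling the object- and arrow-collections (using closure of $\widehat{V_n}$ under the $Z^+$ operations) gives $\mathcal{D}^\mathcal{C}\in\widehat{V_n}$. The slogan is that completeness lets us pull back down to level $n$ anything we can bound inside an $n$-collection.

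It remains to rule out $n$-smallness, which is what upgrades ``$(n+1)$-small'' to ``$n$-large.'' I would exhibit an injection $\widehat{V_n}\hookrightarrow{\bf Ob}_{\mathfrak{cat}_n}$ sending each $n$-collection $X$ to the discrete category $D_X$ on $X$ (object collection $X$, only identity arrows), which is $n$-small and an $n$-collection by {\bf A2}. If ${\bf Ob}_{\mathfrak{cat}_n}$ were an $n$-collection, then by completeness its subcollection $\{D_X\mid X\in\widehat{V_n}\}$ would be an $n$-collection, and the function $D_X\mapsto{\bf Ob}_{D_X}=X$ would have $n$-collection domain and $n$-collection values, so replacement ({\bf A5}) would force $\widehat{V_n}=\mathcal{C}_n$ to be an $n$-collection, i.e.\ $\mathcal{C}_n\in\mathcal{C}_n$, contradicting foundation. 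Hence $\mathfrak{cat}_n$ is not $n$-small, and with $(n+1)$-smallness this yields $n$-largeness.

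I expect the main obstacle to be the local $n$-smallness step. The definition deliberately writes each functor category with the ${^{n+1}}\{\cdots\}$ constructor, so the content lies not in building these homs but in demoting them from $(n+1)$-collections to genuine $n$-collections; this is exactly the place where the completeness half of {\bf A2} (rather than the mere modelling of $Z^+$) is indispensable. Everything else — verifying the category axioms for $\mathfrak{cat}_n$ and that the object and total-arrow collections are $(n+1)$-collections — is routine collection building.
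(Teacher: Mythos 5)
Your proof is correct and, for the substantive part of the lemma, follows the same route as the paper: local $n$-smallness is obtained by bounding ${\bf Ob}_{\mathcal{D}^\mathcal{C}}$ and the natural-transformation collections inside $n$-collections built from powercollections and products of the (already $n$-small) data of $\mathcal{C}$ and $\mathcal{D}$, and then invoking completeness of $\widehat{V_n}$ to demote these subcollections back down to level $n$ --- the paper uses the bound ${\bf Ob}_\mathcal{D}^{{\bf Ob}_\mathcal{C}}\times{\bf Hom}_\mathcal{D}^{{\bf Hom}_\mathcal{C}}$ and $\mathcal{P}({\bf Hom}_\mathcal{D})$ where you use $\mathcal{P}({\bf Ob}_\mathcal{C}\times{\bf Ob}_\mathcal{D})\times\mathcal{P}({\bf Hom}_\mathcal{C}\times{\bf Hom}_\mathcal{D})$, which is the same idea. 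Where you genuinely diverge is in ruling out $n$-smallness: the paper's argument is a one-line Russell-style observation that if $\mathfrak{cat}_n$ were $n$-small it would satisfy its own defining predicate, giving $\mathfrak{cat}_n\in{\bf Ob}_{\mathfrak{cat}_n}$ and hence a membership cycle contradicting foundation; you instead embed $\widehat{V_n}$ into ${\bf Ob}_{\mathfrak{cat}_n}$ via discrete categories and use completeness plus replacement ({\bf A5}) to conclude $\mathcal{C}_n\in\mathcal{C}_n$. Your version is longer but arguably more robust, since it sidesteps the mild awkwardness of whether the $2$-category $\mathfrak{cat}_n$ literally falls under the predicate ``is an ($1$-)category'' used to form its own object collection; the paper's version is shorter and matches the standard ``category of all small categories is not small'' argument. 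Both are valid in this theory, so there is no gap.
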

\begin{proof}
$\mathfrak{cat}_n$ is $n+1$-small by definition. If $\mathfrak{cat}_n$ were $n$-small we would have $\mathfrak{cat}_n\in{\bf Ob}_{\mathfrak{cat}_n}$, a contradiction by foundation, so $\mathfrak{cat}_n$ is not $n$-small. If $\mathcal{C}$ and $\mathcal{D}$ are $n$-small categories then so is $\mathcal{D}^\mathcal{C}$, since for all $n$-collections $X,Y,$ and functions $f:X\to Y$ we have $f:X\to Y\in\mathcal{P}(X\times Y),$ so $$Y^X={^{n+1}}\{f:X\to Y|f\ \text{is a function}\}\subseteq\mathcal{P}(X\times Y)\in\widehat{V_n}\implies Y^X\in\widehat{V_n}.$$ Recalling that $\mathcal{C}$ and $\mathcal{D}$ are $n$-small we have that $${\bf Ob}_{\mathcal{D}^\mathcal{C}}\subseteq{\bf Ob}_\mathcal{D}^{{\bf Ob}_\mathcal{C}}\times{\bf Hom}_\mathcal{D}^{{\bf Hom}_\mathcal{C}}\in\widehat{V_n}\implies{\bf Ob}_{\mathcal{D}^\mathcal{C}}\in\widehat{V_n}.$$ Recalling that a natural transformation is just a subset of arrows $$\alpha={^{n+1}}\{\alpha_X:F(X)\to G(X)|X\in{\bf Ob}_\mathcal{C}\}\subseteq{\bf Hom}_\mathcal{D}$$ satisfying some coherence diagrams, we have again that $\alpha\subseteq{\bf Hom}_\mathcal{D}\implies\alpha\in\mathcal{P}({\bf Hom}_\mathcal{D})$, so $${\bf Hom}_{\mathcal{D}^\mathcal{C}}\subseteq\mathcal{P}({\bf Hom}_\mathcal{D})\in\widehat{V_n}\implies{\bf Hom}_{\mathcal{C}^\mathcal{D}}\in\widehat{V_n},$$ so $\mathcal{D}^\mathcal{C}$ is $n$-small. Since $\mathcal{C}$ and $\mathcal{D}$ were arbitrary $n$-small categories, $\mathfrak{cat}_n$ is locally $n$-small. \\
\end{proof}

We can collect up all locally $n$-small categories by moving up two levels. \\

\begin{defn}[{\it $2$-Category of Locally $n$-Small Categories}]
For all $n<\omega$, we define a $2$-category $$\mathfrak{Cat}_n$$ called the {\bf $2$-category of locally $n$-small categories} as follows:
\begin{itemize}
\item The objects of $\mathfrak{Cat}_n$ are locally $n$-small categories. That is, $${\bf Ob}_{\mathfrak{Cat}_n}={^{n+2}}\{\mathcal{C}|\ \mathcal{C}\ \text{is a locally $n$-small category}\}.$$
\item The component categories and remaining data are defined as in $\mathfrak{cat}_n$, moving up two stages when using collection building since the object collections are only $n+1$-collections a-priori. \\
\end{itemize}
\end{defn}
\begin{lem}
$\mathfrak{Cat}_n$ is an $n+2$-small category that is locally $n+1$-small.
\end{lem}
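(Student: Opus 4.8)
The plan is to establish both bounds on $\mathfrak{Cat}_n$ purely from completeness of the collection stages (\textbf{A2}) together with closure of each $\mathcal{C}_m$ under products and powercollections, mirroring the preceding $\mathfrak{cat}_n$ lemma but shifted up one level. The crucial preliminary step, which I would prove first, is that every locally $n$-small category $\mathcal{C}$ is itself an $n+1$-collection. We have ${\bf Ob}_\mathcal{C}\in\widehat{V_{n+1}}$ by hypothesis, and for the arrows I would argue that each $f\in{\bf Hom}_\mathcal{C}$ lies in some ${\bf Hom}_\mathcal{C}(X,Y)\in\widehat{V_n}$; since $\mathcal{C}_n=\widehat{V_n}$ is transitive by \textbf{A2}, every such $f$ is an $n$-collection, so ${\bf Hom}_\mathcal{C}\subseteq\mathcal{C}_n$. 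As $\mathcal{C}_n\in\mathcal{C}_{n+1}$ and $\mathcal{C}_{n+1}$ is complete, completeness promotes this subcollection to a member, giving ${\bf Hom}_\mathcal{C}\in\widehat{V_{n+1}}$. The structure functions ${\sf dom},{\sf cod},1,\circ$ are subcollections of finite products of ${\bf Ob}_\mathcal{C}$ and ${\bf Hom}_\mathcal{C}$, hence $n+1$-collections as well, and pairing inside $\mathcal{C}_{n+1}$ (again \textbf{A2}) assembles $\mathcal{C}$ into a single $n+1$-collection.

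With this in hand the object bound is immediate: ${\bf Ob}_{\mathfrak{Cat}_n}$ is a subcollection of $\widehat{V_{n+1}}=\mathcal{C}_{n+1}$, and since $\mathcal{C}_{n+1}\in\mathcal{C}_{n+2}$ with $\mathcal{C}_{n+2}$ complete, I get ${\bf Ob}_{\mathfrak{Cat}_n}\in\widehat{V_{n+2}}$. For local $n+1$-smallness I would bound each component category $\mathcal{D}^\mathcal{C}$. A functor $\mathcal{C}\to\mathcal{D}$ is a pair of functions between the $n+1$-collections ${\bf Ob}_\mathcal{C},{\bf Ob}_\mathcal{D}$ and ${\bf Hom}_\mathcal{C},{\bf Hom}_\mathcal{D}$, so the collection of all functors embeds into ${\bf Ob}_\mathcal{D}^{{\bf Ob}_\mathcal{C}}\times{\bf Hom}_\mathcal{D}^{{\bf Hom}_\mathcal{C}}$, an $n+1$-collection by closure of $\mathcal{C}_{n+1}$ under powercollections and products; completeness then yields ${\bf Ob}_{\mathcal{D}^\mathcal{C}}\in\widehat{V_{n+1}}$. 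Likewise a natural transformation is a function ${\bf Ob}_\mathcal{C}\to{\bf Hom}_\mathcal{D}$, so ${\bf Hom}_{\mathcal{D}^\mathcal{C}}\subseteq{\bf Hom}_\mathcal{D}^{{\bf Ob}_\mathcal{C}}\in\widehat{V_{n+1}}$, whence ${\bf Hom}_{\mathcal{D}^\mathcal{C}}\in\widehat{V_{n+1}}$. Assembling as before, $\mathcal{D}^\mathcal{C}\in\widehat{V_{n+1}}$, which is precisely the assertion that the hom-categories of $\mathfrak{Cat}_n$ are $n+1$-collections.

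Finally, for full $n+2$-smallness I still need the total cell collection ${\bf Hom}_{\mathfrak{Cat}_n}$ (all $1$- and $2$-cells) to be an $n+2$-collection. Every such cell is a member of some $\mathcal{D}^\mathcal{C}\in\widehat{V_{n+1}}$, hence an $n+1$-collection by transitivity of $\mathcal{C}_{n+1}$, so ${\bf Hom}_{\mathfrak{Cat}_n}\subseteq\mathcal{C}_{n+1}\in\mathcal{C}_{n+2}$ and completeness of $\mathcal{C}_{n+2}$ gives ${\bf Hom}_{\mathfrak{Cat}_n}\in\widehat{V_{n+2}}$; together with the object bound this is $n+2$-smallness. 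I expect the main obstacle to be the first step. Unlike the $n$-small case, a locally $n$-small category may have an object collection that is a proper $n+1$-collection, so ${\bf Hom}_\mathcal{C}$ could a priori outrun $\widehat{V_n}$; the resolution is that local smallness forces each \emph{individual} arrow down into $\widehat{V_n}$, and it is completeness (subcollections of members are members), rather than replacement, that then gathers the arrows at level $n+1$. Pinning down that this is genuinely a completeness argument is the one place the proof departs from the routine ${\bf Set}$-style bookkeeping.
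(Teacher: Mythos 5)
Your proof is correct and follows the same basic route as the paper, whose proof of this lemma is only two sentences: $n+2$-smallness holds ``by definition'' and local $n+1$-smallness ``follows from arguments similar to those in the preceding lemma (function spaces between $n+1$-collections are again $n+1$-collections).'' The one place you genuinely add something is your preliminary step. The paper's parenthetical hint only applies once one knows that ${\bf Ob}_\mathcal{D}$ \emph{and} ${\bf Hom}_\mathcal{D}$ are $n+1$-collections for every locally $n$-small $\mathcal{D}$, and the latter is not part of the definition of local $n$-smallness --- a locally $n$-small category has a proper $n+1$-collection of objects in general, so its total arrow collection could a priori escape level $n+1$. Your argument --- each arrow lies in some ${\bf Hom}_\mathcal{D}(X,Y)\in\widehat{V_n}$, hence is an $n$-collection by transitivity of $\mathcal{C}_n$, so ${\bf Hom}_\mathcal{D}\subseteq\mathcal{C}_n\in\mathcal{C}_{n+1}$ and completeness of $\mathcal{C}_{n+1}$ promotes it to a member --- is exactly the missing bridge, and you are right that it is completeness rather than replacement doing the work. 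The only cosmetic divergence is on $n+2$-smallness: the paper reads ${\bf Ob}_{\mathfrak{Cat}_n}\in\widehat{V_{n+2}}$ directly off the collection-building axiom {\bf A4}, since the object collection is introduced as ${^{n+2}}\{\mathcal{C}\,|\,\dots\}$, whereas you re-derive it from completeness of $\mathcal{C}_{n+2}$; your route proves slightly more (every object of $\mathfrak{Cat}_n$ is itself an $n+1$-collection, which is what underwrites the paper's later claim that $\mathfrak{Cat}_n\subset\mathfrak{cat}_{n+1}$), but the axiom already gives the stated bound for free.
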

\begin{proof}
$\mathfrak{Cat}_n$ is $n+2$-small by definition. Local $n+1$-smallness follows from arguments similar to those in the preceding lemma (function spaces between $n+1$-collections are again $n+1$-collections). \\
\end{proof}

We can also collect up all $n$-large categories by moving two levels up. \\

\begin{defn}[{\it $2$-Category of $n$-Large Categories}]
For all $n<\omega$, we define a $2$-category $$\text{\textgoth{CAT}}_n$$ called the {\bf $2$-category of $n$-large categories} as follows:
\begin{enumerate}
\item The objects of \textgoth{CAT}$_n$ are $n$-large categories. That is, $${\bf Ob}_{\text{\textgoth{CAT}}_n}={^{n+2}}\{\mathcal{C}|\mathcal{C}\ \text{is an $n$-large category}\}.$$
\item The component categories and remaining data are defined as in $\mathfrak{Cat}_n$. \\
\end{enumerate}
\end{defn}

In general we have that $\mathfrak{cat}_n\nsubseteq\text{\textgoth{CAT}}_n\subset\mathfrak{cat}_{n+1}$ since $n$-small categories are excluded from \textgoth{CAT}$_n$ by definition. We do have that $\mathfrak{cat}_n\cup\text{\textgoth{CAT}}_n=\mathfrak{cat}_{n+1}$, and further for all $n<\omega$ $$\mathfrak{cat}_n\subset\mathfrak{Cat}_n\subset\mathfrak{cat}_{n+1}$$ by definition, so in particular $$\mathfrak{cat}_0\subset\mathfrak{Cat}_0\subset\mathfrak{cat}_1\subset\cdots.$$ Even more strongly, for all $n<\omega$ $$\mathfrak{cat}_n\in\mathfrak{Cat}_n\in\mathfrak{Cat}_{n+1}$$ by the above lemmas, so in particular $$\mathfrak{cat}_0\in\mathfrak{Cat}_0\in\mathfrak{Cat}_1\in\cdots.$$  Once again, we could assume {\bf A6}, define $\mathfrak{cat}_\omega$ and $\mathfrak{Cat}_\omega$, and observe that the former is the colimit of the inclusion functors induced by the above chain of inclusions, the former includes and contains all preceding categories, and the latter includes and contains the former. Higher $n$-categories (in the categorical sense) admit multiple possible levels of 'local smallness', and all of them can be collected up using sufficiently higher universes of collections. For $\infty$-categories we need to add {\bf A6} and move up to $GHOST$ to define the final stage of 'local smallness', and this is the only categorical setting where $GHOST$ is easier to use than $HOST$ that the author can see. \\

\chapter{References}

\begin{itemize}
\item $[\text{Kam19}]$ - Williams, Kameryn. "{\it Minimum Models of Second Order Set Theories}". The Journal of Symbolic Logic, 84(2), 589-620. \href{https://doi.org/10.1017/jsl.2019.27}{doi:10.1017/jsl.2019.27}
\item $[\text{Zer08}]$ - Zermelo, Ernst. "{\it Untersuchungen über die Grundlagen der Mengenlehre I}". Mathematische Annalen, 65 (2): 261–281, \href{https://doi.org/10.1007/BF01449999}{doi:10.1007/bf01449999}, S2CID 120085563
\item $[\text{Fra21}]$ - Fraenkel, Abraham. "{\it Uber die Zermelosche Begründung der Mengenlehre}". Jahresbericht der Deutschen Mathematiker-Vereinigung. vol. 30. pp. 97-98.
\item $[\text{Mon69}]$ - Monk, J. Donald. "Introduction to Set Theory." McGraw-Hill Book Company, 1969. \\ \href{http://euclid.colorado.edu/~monkd/monk11.pdf}{http://euclid.colorado.edu/~monkd/monk11.pdf}. \\
\end{itemize}

\end{document}